\numberwithin{equation}{section}
\newcommand\s{\sigma}
\renewcommand\a{\alpha}
\renewcommand\b{\beta}
\renewcommand\a{\alpha}
\renewcommand\b{\beta}
\newcommand\R{\mathbb R}
\newcommand\br{\begin{remark}}
\newcommand\er{\end{remark}}
\newcommand\bp{\begin{pmatrix}}
\newcommand\ep{\end{pmatrix}}
\newcommand{\be}{\begin{equation}}
\newcommand{\ee}{\end{equation}}
\newcommand\ba{\begin{equation}\begin{aligned}}
\newcommand\ea{\end{aligned}\end{equation}}
\newcommand{\bap}{\begin{app}}
\newcommand{\eap}{\end{app}}
\newcommand{\begs}{\begin{exams}}
\newcommand{\eegs}{\end{exams}}
\newcommand{\beg}{\begin{example}}
\newcommand{\eeg}{\end{exaplem}}
\newcommand{\bpr}{\begin{proposition}}
\newcommand{\epr}{\end{proposition}}
\newcommand{\bt}{\begin{theorem}}
\newcommand{\et}{\end{theorem}}
\newcommand{\bc}{\begin{corollary}}
\newcommand{\ec}{\end{corollary}}
\newcommand{\bl}{\begin{lemma}}
\newcommand{\el}{\end{lemma}}
\newcommand{\bd}{\begin{definition}}
\newcommand{\ed}{\end{definition}}
\newcommand{\brs}{\begin{remarks}}
\newcommand{\ers}{\end{remarks}}
\newcommand{\RR}{{\mathbb R}}
\newtheorem{theorem}{Theorem}[section]
\newtheorem{proposition}[theorem]{Proposition}
\newtheorem{corollary}[theorem]{Corollary}
\newtheorem{lemma}[theorem]{Lemma}
\theoremstyle{remark}
\newtheorem{remark}[theorem]{Remark}
\theoremstyle{definition}
\newtheorem{definition}[theorem]{Definition}
\newtheorem{example}[theorem]{Example}
\newcommand{\beq}{\begin{equation}}
\newcommand{\eeq}{\end{equation}}
\subjclass[2000]{35L65, 35L67, 35B35}
\title{
Entropy criteria and stability of extreme shocks: 
a remark on a paper of Leger and Vasseur
}
\author{Benjamin Texier}
\address{ Universit\'e Paris-Diderot, Institut de Math\'ematiques de Jussieu, UMR CNRS 7586, and Ecole Normale Sup\'erieure, D\'epartement de Math\'ematiques et Applications, UMR CNRS 8553}
\email{ texier@math.jussieu.fr}
\thanks{Research of B.T. was partially supported by the Project ``Instabilities in Hydrodynamics'' funded by the Mairie de Paris (under the ``Emergences'' program) and the Fondation Sciences Math\'ematiques de Paris.}
\author{Kevin Zumbrun}
\address{Indiana University, Bloomington, IN 47405}
\email{kzumbrun@indiana.edu} 
\thanks{Research of K.Z. was partially supported
under NSF grants no. DMS-0300487 and DMS-0801745. K.Z. thanks the Ecole Normale Sup\'erieure, Ulm, and coordinating host Thomas Alazard
for their hospitality during the visit in which this
work was carried out.}
\thanks{Both authors thank the anonymous referee for helpful comments regarding the presentation. }
\begin{document}

\begin{abstract}
We show that 
a relative entropy condition recently shown by Leger and Vasseur
to imply uniqueness and stable $L^2$ dependence on initial data of
Lax $1$- or $n$-shock solutions of an $n\times n$ system of hyperbolic
conservation laws
with convex entropy
 implies Lopatinski
stability in the sense of Majda. This means in particular that Leger and Vasseur's relative entropy condition represents a considerable improvement over the standard entropy condition of decreasing shock strength and increasing entropy along forward Hugoniot curves, which, in a recent example exhibited by Barker, Freist\"uhler and Zumbrun, was shown to fail to imply Lopatinski stability, even for systems with convex entropy.
This observation bears also on the parallel question of existence,
at least for small $BV$ or $H^s$ perturbations. 
\end{abstract}
\date{\today}
\maketitle


\section{Introduction}
In this brief note, we examine for extreme Lax shock solutions of a
system of conservation laws
\be\label{system}
u_t + f(u)_x=0, \; u\in \RR^n,
\ee
possessing a convex entropy
\be\label{ent}
\eta, \quad P := \nabla^2 \eta>0,\quad \nabla_u \eta \nabla_u f= \nabla_u q,
\ee
the relation between Lopatinski stability in the sense of 
Erpenbeck and Majda \cite{Er,M1,M2,M3} and a relative entropy criterion
introduced recently by Leger and Vasseur \cite{LV}.

A number of entropy criteria have been proposed over the years to
distinguish physically admissible or stable shock waves.
Some of the oldest \cite{B,W}
are decrease of characteristic speed (compressivity) or increase in entropy
across the shock, or their instantaneous equivalents:
monotone decrease of shock speed or increase of entropy
along the forward $1$-Hugoniot curve from a fixed left state.
All of these conditions coincide for small-amplitude waves \cite{Sm},
agreeing also with the property of stability, or well-posedness
of the shock solution with respect to nearby initial data, as determined
by nonvanishing of a certain Lopatinski determinant \cite{La,Er,M1,M2,M3,Da};
however, for large-amplitude waves, the relations between these different
criteria are unclear.


{The related concept of entropy dissipation $\eta_t-q_x \leq 0$ 
in the sense of measures at the shock, or $[\eta]-\sigma[q]\leq 0$ for
shock speed $\sigma,$ is justified as a
necessary condition for the physicality criterion of {\it admissibility}. Here we are thinking of admissibility in the sense of Lax, for small-amplitude waves, under the assumption of genuine nonlinearity and strictly convex entropy (Theorem 5.7, \cite{La}), but also admissibility in the sense of existence of a nearby viscous shock profile, for arbitrary-amplitude waves, under the assumption of
an associated entropy-compatible viscosity \cite{K, KS}.}

The property of entropy dissipation 
may be seen to follow from monotone decrease
of shock speed along the Hugoniot curve (Lemma 3, \cite{LV}; see also \cite{La})
making a link (if only one way) between large-amplitude admissibility and 
the more classical monotonicity condition defined originally in a
small-amplitude context.

Most recently, Leger and Vasseur have introduced
a {\it relative entropy condition} pertaining to arbitrary-amplitude 
waves\footnote{See also an earlier analysis by Leger \cite{Leg}, proving $L^2$-contractivity of entropy solutions in the scalar case.}.
Specifically, assuming 
$L^\infty$ boundedness and
 a strong trace property on solutions\footnote{Satisfied by $BV$ solutions.}, these authors establish for systems
of conservation laws possessing a convex entropy uniqueness and
stable dependence 
in $L^2$ 
on initial data for perturbations 
of extreme Lax shock waves (without loss of generality $1$-shocks)
of arbitrary amplitude, under the conditions that 
\begin{itemize}
\item[(i)] shock speed is nonincreasing
along the forward Hugoniot curve from a fixed left state; explicitly $\s'(s) \leq 0$ for $s \geq 0,$ with notation introduced in Section \ref{s2} below, and 
\item[(ii)] relative entropy (defined in \eqref{etarel}) is nondecreasing with respect to the right state along
the forward Hugoniot curve; explicitly $d_s \eta(u|S_u(s)) \geq 0$ for $s \geq 0.$
\end{itemize}
The result of Leger and Vasseur is nominally an a priori short-time stability estimate and leaves open the question of existence, even for more standard 
perturbations that are small in $BV$ or $H^s.$

The purpose of the present note is to verify that conditions (i)-(ii) imply satisfaction of the Lopatinski condition of Majda \cite{M1,M2,M3,Le1}.
Indeed, we show that satisfaction of the Lopatinski condition follows
from the much weaker conditions that (i) and (ii) hold only for the single value $s = s_+$ corresponding to the right endstate; see (i')-(ii') below. 


This observation immediately yields, by the existing theory of
\cite{Le1,M1,M2,M3}, existence and stability
for the classes of small $BV$ or $H^s$ perturbations under conditions
(i')-(ii').
One might hope that it could be eventually of use also in constructing
approximate solutions and ultimately the demonstration of existence
in the much more
delicate $L^2$/strong trace setting considered in \cite{LV}.

We remark that Lopatinski stability is necessary for the physicality 
condition of {\it viscous stability}, or stability of an associated
viscous profile \cite{ZS}.

\br
From the above discussion, (i) is sufficient for entropy dissipation,
which is necessary for the physicality
condition of admissibility, and (i')-(ii') are sufficient for 
Lopatinski stability, which is necessary for the 
physicality condition of viscous stability.
As strict entropy dissipation and Lopatinski stability are open conditions,
whereas (i) and (i')-(ii'), as nonstrict monotonicity conditions,
are closed, it is evident that {\it {\rm (i)} and {\rm (i')-(ii'),} 
are sufficient but not necessary} 
for strict entropy dissipation and Lopatinski stability, respectively.
For, a closed condition holds at the boundary of its region of satisfaction,
whereupon an implied open condition must therefore hold at some point
outside.
\er

\br \label{r31}
An examination of the argument of \cite{LV}, reveals that their hypothesis (ii) may be weakened to\footnote{Specifically, in the course of the proof of Theorem 3 in \cite{LV}, assumption (ii) is used only in the key Lemma 4, which is invoked only in Lemma 8, in 1-3 page 291, where the weakened form (ii$^*$) is used, not for all $s \geq 0$ but only in an interval $0 \leq s \leq s_+ + C$ (in their notation, $0 \leq s \leq s_u$), where $C > 0$ is determined by the $L^\infty$ bound assumed on solutions.} 
$$\mbox{\rm (ii$^*$)} \quad  (s - s_+) \big( \eta(u|S_{u}(s))-\eta(u|S_u(s_+)) \big) \geq 0,  \quad  \mbox{for $s \geq 0,$}$$
with notation introduced in Section \ref{s2}, where $(u, S_u(s_+), \s(s_+))$ is the fixed 1-shock under consideration.
Evidently, (ii$^*$) implies our condition (ii') stated in Assumption II below. 
\er

\section{Definitions and result} \label{s2} 

Let $\eta,q$ be a convex entropy/entropy flux pair.
Then \cite{La}, for $P = \nabla^2 \eta$, $A:=\nabla f$,
$P$ is symmetric positive definite
and $PA$ is symmetric. 
Thus, $A$ is self-adjoint with respect to the inner product induced by $P$,
and so the eigenvectors of $A$ corresponding to distinct eigenvalues are $P$-orthogonal.
The {\it relative entropy} $\eta(u|v)$ is defined 
following \cite{D,LV} as
\be\label{etarel}
\eta(u|v):=\eta(u)-\eta(v)-\nabla \eta(v)(u-v).
\ee

We assume that $A$ is strictly hyperbolic\footnote{For Theorem \ref{main} to hold, we only need strict hyperbolicity to hold at the right endstate $S_u(s_+)$ introduced in Assumption II.}, with eigenvalues $a_1<a_2<\dots<a_n,$ and associated eigenvectors $r_1,r_2,\dots,r_n.$ 

\medskip
{\bf Assumptions I.}
For a given left state $u$, suppose that there is a well-defined
$C^1$
$1$-Hugoniot curve of states $S_u(s)$ and associated speeds
$\sigma(s)$,  $s\geq 0$, satisfying
\be\label{rh}
\sigma(s)(S_u(s)-u)=f(S_u(s))-f(u),
\ee
with $S_u(0)=u$, $\sigma(0)=a_1(u)$.
More precisely, assume that condition \eqref{rh} is everywhere full rank,
with the linearized equations
\be\label{lrh}
\sigma'(s) (S_u(s)-u) = \big(A(S_u(s))-\sigma(s) \big)S_u'(s)
\ee
(hence, by the Implicit Function Theorem, also 
the nonlinear equations \eqref{rh})
uniquely solvable (up to a constant multiplier) for $(\sigma'(s), S_u'(s))$.
Moreover, assume that the resulting discontinuity is a Lax $1$-shock, in
the sense that 
\be\label{lax}
 a_1(u)>\sigma(s) \quad \hbox{\rm  and} \quad a_1(S_u(s)) <\sigma(s)<
a_2(S_u(s)) < a_3(S_u(s)) < \dots < a_n(S_u(s)).
\ee

\medskip

The {\it Lopatinski} (stability) {\it condition} for the
shock $(u,S_u(s),\sigma(s)),$ with notation introduced in Assumptions I above, is 
\be\label{lop}
\det \bp (S_u(s)-u) & r_2(S_u(s)) & \dots & r_n(S_u(s))\ep \neq 0.
\ee

This may be recognized as the condition that the Riemann problem
be well-posed for data near $(u,S_u(s))$, more
precisely, that the Jacobian of the associated Lax wave-map 
be full rank \cite{La,Sm}.

Condition \eqref{lop} is a crucial 
building block both (through resulting a priori stability estimates)
for the small $H^s$-perturbation existence/stability 
theory of Majda \cite{M1,M2,M3,Me} and
(through direct construction based on Riemann solutions) in 
the small-$BV$ perturbation existence/stability 
theory of Lewicka and others \cite{Le1,Le2,C}
in the vicinity of a single large-amplitude shock.

\medskip
{\bf Assumptions II.}
(i') $\sigma'(s_+)\leq 0$,
(ii') $d_s \eta(u | S_u(s_+))\geq 0$, for shock $(u,S_u(s_+),\s(s_+))$, 
$s_+ \geq 0$.

\begin{lemma}\label{l}
Condition {\rm (ii')} is equivalent 
to 
\be\label{form}
\langle S_u'(s_+), \nabla^2\eta(S_u(s_+)) (S_u(s_+)-u) \rangle \geq 0.
\ee
\end{lemma}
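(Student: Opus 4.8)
The plan is to simply differentiate the relative entropy along the Hugoniot curve and read off the claimed formula. Write $v(s) := S_u(s)$, so that $v'(s) = S_u'(s)$, and recall from \eqref{etarel} that
\[
\eta(u\,|\,v) = \eta(u) - \eta(v) - \nabla\eta(v)\,(u-v).
\]
Since $u$ is fixed, differentiating in $s$ gives three terms from the $v$-dependence: one from $-\eta(v)$, one from the factor $\nabla\eta(v)$ in the last term, and one from the factor $(u-v)$ in the last term. Explicitly,
\[
d_s\,\eta(u\,|\,v) = -\nabla\eta(v)\cdot v' - \big(\nabla^2\eta(v)\,v'\big)\cdot(u-v) + \nabla\eta(v)\cdot v'.
\]

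The first and third terms cancel, leaving $d_s\,\eta(u\,|\,v) = -\big(\nabla^2\eta(v)\,v'\big)\cdot(u-v) = \big(\nabla^2\eta(v)\,v'\big)\cdot(v-u)$. Using that $\nabla^2\eta$ is symmetric (indeed $P=\nabla^2\eta>0$ as recorded in Section~\ref{s2}), this equals $\langle v', \nabla^2\eta(v)\,(v-u)\rangle$. Evaluating at $s=s_+$, i.e. at $v=S_u(s_+)$ and $v'=S_u'(s_+)$, yields
\[
d_s\,\eta(u\,|\,S_u(s_+)) = \langle S_u'(s_+),\, \nabla^2\eta(S_u(s_+))\,(S_u(s_+)-u)\rangle,
\]
so that condition (ii'), namely $d_s\,\eta(u\,|\,S_u(s_+))\geq 0$, is literally the same inequality as \eqref{form}, which proves the equivalence.

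There is essentially no obstacle here: the only points requiring any care are the bookkeeping of the chain rule on the term $\nabla\eta(v)\,(u-v)$ — making sure both the $\nabla\eta(v)$ factor and the $(u-v)$ factor are differentiated — and the use of symmetry of the Hessian to pass from $\big(\nabla^2\eta(v)\,v'\big)\cdot(v-u)$ to $\langle v', \nabla^2\eta(v)(v-u)\rangle$. The regularity needed ($S_u\in C^1$, $\eta\in C^2$) is already part of the standing hypotheses (Assumptions~I and the convex entropy assumption \eqref{ent}).
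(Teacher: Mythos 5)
Your computation is correct and is essentially identical to the paper's own proof: both differentiate \eqref{etarel} along the Hugoniot curve, observe the cancellation of the two $\nabla\eta(S_u(s))\,S_u'(s)$ terms, and use symmetry of $\nabla^2\eta$ to arrive at \eqref{form}. Nothing further is needed.
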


\begin{proof}
Differentiating \eqref{etarel}, we have
$$
\begin{aligned}
d_s \eta(u|S_u(s))
 &= d_s \big[ \eta(u)-\eta(S_u(s))-\nabla \eta(S_u(s)) (u-S_u(s)) \big] \\
&=
-\nabla \eta(S_u(s))S_u'(s)
- \big[ \big\langle S_u'(s),\nabla^2 \eta (S_u(s))(u-S_u(s))\big\rangle
-\nabla \eta(S_u(s))S_u'(s) \big] \\
&=
-  \big\langle S_u'(s),\nabla^2 \eta(S_u(s))(u-S_u(s)) \big\rangle,
\end{aligned}
$$
whence the assertion follows.
\end{proof}

\begin{theorem}\label{main}
Under Assumptions {\rm I} and {\rm II,} Lopatinski condition \eqref{lop} holds 
for $(u,S_u(s_+),\sigma(s_+))$.
\end{theorem}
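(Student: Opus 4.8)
The plan is to argue by contradiction. Suppose the Lopatinski determinant in \eqref{lop} vanishes at $s=s_+$. Abbreviate $v:=S_u(s_+)$, $\sigma:=\sigma(s_+)$, $a_j:=a_j(v)$, $r_j:=r_j(v)$, and $P:=\nabla^2\eta(v)$. Vanishing of that determinant means that $v-u$ lies in the span of $r_2,\dots,r_n$, so we may write $v-u=\sum_{j=2}^n c_j r_j$. First note that $v\neq u$: otherwise \eqref{lax} would force simultaneously $a_1(u)>\sigma$ and $a_1(u)=a_1(v)<\sigma$, which is impossible; hence the coefficients $c_j$ are not all zero.

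Next I would substitute this decomposition into the linearized Rankine--Hugoniot relation \eqref{lrh} at $s=s_+$, namely $\sigma'(s_+)(v-u)=\big(A(v)-\sigma\big)S_u'(s_+)$, after expanding $S_u'(s_+)=\sum_{j=1}^n d_j r_j$ in the eigenbasis of $A(v)$. Comparing $r_j$-coefficients on both sides and using the strict Lax inequalities $a_1<\sigma<a_2<\dots<a_n$ furnished by \eqref{lax}, one obtains $d_1=0$ (since $a_1-\sigma\neq 0$) together with $d_j=\sigma'(s_+)\,c_j/(a_j-\sigma)$ for $j=2,\dots,n$. At this stage I invoke the structure coming from the convex entropy: since $P$ is symmetric positive definite and $PA(v)$ symmetric, the eigenvectors $r_1,\dots,r_n$ of $A(v)$ (distinct eigenvalues, by strict hyperbolicity at $v$) are mutually $P$-orthogonal. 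Consequently, using Lemma~\ref{l} in the form \eqref{form} together with the relations just obtained,
\be\label{plankey}
0\le \langle S_u'(s_+),P(v-u)\rangle=\sum_{j=2}^n d_j c_j\,\langle r_j,Pr_j\rangle=\sigma'(s_+)\sum_{j=2}^n\frac{c_j^2\,\langle r_j,Pr_j\rangle}{a_j-\sigma}.
\ee
By \eqref{lax} each denominator $a_j-\sigma$ is positive and each $\langle r_j,Pr_j\rangle$ is positive, so the last sum in \eqref{plankey} is \emph{strictly} positive; combined with condition~(i'), which gives $\sigma'(s_+)\le 0$, the right-hand side of \eqref{plankey} is $\le 0$. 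Hence it vanishes, which forces $\sigma'(s_+)=0$, and then $d_1=\dots=d_n=0$, i.e.\ $S_u'(s_+)=0$. But $(\sigma'(s_+),S_u'(s_+))=(0,0)$ contradicts the hypothesis in Assumptions~I that \eqref{lrh} be uniquely solvable, up to a nonzero constant multiplier, for $(\sigma'(s),S_u'(s))$. This contradiction proves \eqref{lop}.

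The conceptual core of the argument is the single observation that a strictly convex entropy renders the right eigenvectors of $A(v)$ orthogonal for the inner product induced by $\nabla^2\eta(v)$; granting this, the Lax sign conditions make the quadratic form appearing in \eqref{plankey} definite, and everything else is bookkeeping with the linearized Rankine--Hugoniot relation. I expect the only step requiring care to be the borderline case $\sigma'(s_+)=0$, disposed of above via the full-rank (nondegeneracy) hypothesis of Assumptions~I; this borderline behaviour is exactly the closed-versus-open phenomenon highlighted in the Introduction.
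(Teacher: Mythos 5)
Your proof is correct and follows essentially the same route as the paper's: decompose $S_u(s_+)-u$ in the eigenbasis at the right state, push it through the linearized Rankine--Hugoniot relation \eqref{lrh}, and use the $P$-orthogonality of the eigenvectors together with the Lax sign conditions to contradict the sign furnished by Lemma~\ref{l}. The only cosmetic difference is that the paper rules out $\sigma'(s_+)=0$ at the outset (so the contradiction is between a strictly positive and a nonpositive quantity), whereas you defer that borderline case to the end; both rest on the same nondegeneracy clause of Assumptions I.
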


\begin{proof} Failure of \eqref{lop} implies that 
 $$ S_u(s_+) - u = \sum_{2 \leq j \leq n} \a_j r_j^+, \qquad  r_j^+ := r_j(S_u(s_+)),$$
 for some $\a_j \in \R,$ which are not all equal to zero, since we may assume $s_+ > 0,$ $S_u(s_+) \neq u.$ 
 Then, by Assumptions I, 
 $$ (A_+ - \s(s_+)) S'_u(s_+) = \s'(s_+) \sum_{2 \leq j \leq n} \a_j r_j^+, \qquad A_+ := A(S_u(s_+)).$$
 Hence, inverting $A_+ - \s(s_+)$ (as we may by \eqref{lax}): 
$$ 
  S'_u(s_+) = \s'(s_+) \sum_{2 \leq j \leq n} \b_j r_j^+, \qquad \b_j := (a_j(S_u(s_+))- \s(s_+))^{-1} \a_j.
$$ 
We remark that, since $(u, S_u(s_+), \s(s_+))$ is a $1$-shock \eqref{lax}, there holds $\b_j \a_j \geq 0,$  and, since the shock is non-trivial, $\b_j \a_j > 0$ for at least one $j.$ Hence, by $\s'(s_+) \neq 0$ (a consequence of Assumptions I), positive definiteness of $P,$ and the fact that the eigenvectors of $A$ are $P$-orthogonal, we deduce 
\be \label{contr} 
 \big\langle S'_u(s_+), P_+ (A_+ - \s(s_+)) S_u'(s_+) \big\rangle  = \s'(s_+)^2 \sum_{2 \leq j \leq n} \b_j \a_j \big\langle r_j^+, P_+ r_j^+\big\rangle > 0,  
\ee
with notation $P_+ := P(S_u(s_+)) = \nabla^2 \eta(S_u(s_+)).$
However, Assumptions II and Lemma \ref{l} imply
$$ \s'(s_+) \big\langle S'_u(s_+), P_+ (S_u(s_+) - u) \big\rangle\leq 0,$$
so that, by \eqref{lrh},
$$ \big\langle S'_u(s_+), P_+( A_+ - \s(s_+)) S'_u(s_+) \big\rangle \leq 0,$$
in contradiction with \eqref{contr}.
\end{proof}

\section{discussion and open problems}
The corresponding absolute entropy conditions 
that shock strengh is decreasing
and 
 absolute entropy $\eta(S_u(s))$ is increasing along the forward
Hugoniot curve have been shown \cite{B} to hold {\it globally}
for very general gas dynamical equations of state.
However, recently, Barker, Freist\"uhler, and Zumbrun 
\cite{BFZ}
have shown by explicit example that there exist systems satisfying these
conditions and also possessing a convex entropy, but for which nonetheless
{\it the Lopatinski condition can fail}.
Thus, the relative entropy condition represents a considerable sharpening
of the older absolute entropy condition.

An interesting open problem would be to find an analog of this
condition for intermediate shocks; however, we see no obvious candidate for
this.  Certainly, the approach of Theorem \ref{main} breaks down,
since there is no relation between $P(u)$- and $P(S_u(s))$-orthogonality.
An interesting, but more speculative, problem
would be to make use of the Lopatinski condition to construct 
approximate solutions in the small $L^2$-perturbation class, 
towards an eventual possible small $L^2$/strong trace class 
existence theory.
It would be extremely interesting, of course, to find some analog
also for the corresponding viscous shock stability problem, whether
directly as in \cite{LV}, or indirectly as here through the study of spectral
stability and the linearized eigenvalue problem.

\end{document}